\newtheorem{lemma}{Lemma}[section]
\newtheorem{theorem}[lemma]{Theorem}
\newtheorem{rmk}{Remark}[section]
\title{Characterization of groups $E_6(3)$ and ${^2}E_6(3)$ by Gruenberg--Kegel graph\footnote{The work is supported by the Mathematical Center in Akademgorodok under the agreement No. 075-15-2019-1675 with the Ministry of Science and Higher Education of the Russian Federation.}}\author{A.\,P.~Khramova, N.\,V.~Maslova, V.\,V.~Panshin, and A.\,M.~Staroletov}
\date{}
\begin{document}
\maketitle
\vspace{-25px}
\begin{abstract}
The Gruenberg--Kegel graph (or the prime graph) $\Gamma(G)$ of a finite group $G$ is defined as follows. The vertex set of $\Gamma(G)$ is the set of all prime divisors of the order of $G$. Two distinct primes $r$ and $s$ regarded as vertices are adjacent in $\Gamma(G)$ if and only if there exists an element of order $rs$ in $G$.
Suppose that $L\cong E_6(3)$ or $L\cong{}^2E_6(3)$. We prove that if $G$ is a finite group such that $\Gamma(G)=\Gamma(L)$, then $G\cong L$.
\end{abstract}

\section{Introduction}
Given a finite group $G$, denote by $\omega(G)$ the spectrum of $G$, that is the set of all its element orders. The set of all prime divisors of the order of $G$ is denoted by $\pi(G)$. {\it The  Gruenberg--Kegel graph} (or {\it the prime graph}) $\Gamma(G)$ of $G$ is defined as follows. The vertex set is the set $\pi(G)$. Two distinct primes $r$ and $s$ regarded as vertices of $\Gamma(G)$ are adjacent in $\Gamma(G)$ if and only if $rs\in\omega(G)$. 
The concept of prime graph of a finite group was introduced by G.K Gruenberg and O. Kegel.
Now this graph is known as Gruenberg-Kegel graph. They also gave a characterization of finite groups with disconnected prime graph but did not publish it. This result can be found in \cite{Will81}, where J.S. Williams started the classification of finite simple groups with disconnected Gruenberg--Kegel graph.

Denote the set of orders of maximal abelian subgroups of $G$ by $M(G)$.
Note that if $\omega(G)=\omega(H)$ or $M(G)=M(H)$,  then $\Gamma(G)=\Gamma(H)$. Consider alternating groups $Alt_5$ and $Alt_6$ of degrees 5 and 6, respectively. Then $\Gamma(Alt_5)=\Gamma(Alt_6)$ but $\omega(Alt_5)=\{1,2,3,5\}=\omega(Alt_6)\setminus\{4\}$ and $M(Alt_5)=\{1,2,3,4,5\}=M(Alt_6)\setminus\{9\}$.

We say that a finite group $G$ is {\it recognizable by $\Gamma(G)$} ($\omega(G)$ or $M(G)$) if for every finite group $H$ the equality $\Gamma(H)=\Gamma(G)$ ($\omega(H)=\omega(G)$ or $M(H)=M(G)$, respectively) implies that $H$ is isomorphic to $G$.
Clearly, if $G$ is recognizable by $\Gamma(G)$, then it is also recognizable by $\omega(G)$ and $M(G)$. The converse is not true in general: the group $Alt_5$ is known to be uniquely determined by spectrum \cite{Shi}, while there are infinitely many groups with the same spectrum as $Alt_6$ \cite{Brandl}. The modern  state of the study on characterization of simple groups by Gruenberg--Kegel grpah can be found, for example, in the recent work by P.~J.~Cameron and the second author~\cite{CamMas}. In particular, in~\cite{CamMas} the authors have proved that if a finite group $L$ is recognizable by $\Gamma(L)$,
then $L$ is almost simple, that is its socle is a nonabelian simple group.

If $p$ is a prime and $q=p^k$ is its power, then by $E_6^+(q)$ and $E_6^-(q)$ we denote the simple exceptional groups $E_6(q)$ and $^2{}E_6(q)$, respectively. Finite groups $G$ such that $\omega(G)=\omega(L)$, where $L\cong E^{\pm}_6(q)$, were described in \cite{Kon07,Grech15,Zvez16}, in particular, if $p\in \{2,11\}$, then the equality $\omega(G)=\omega(L)$ implies $G\cong L$. In \cite{Khos} it is proved that if $G$ is a finite group with $M(G)=M(L)$, where $L\cong E_6(q)$, then $G$ has a unique nonabelian composition factor and this factor is isomorphic to $L$. Nevertheless, there are few results about groups having Gruenberg--Kegel graph as simple groups $E^\pm_6(q)$. In \cite{KonE6} and \cite{Kon2E6} it is proved that if $L\cong E^\pm_6(2)$ and $\Gamma(G)\cong\Gamma(L)$, then $G\cong L$.
The purpose of this paper is to show that groups $E^+_6(3)$ and $E^-_6(3)$ are recognizable by their Gruenberg--Kegel graphs. 

We prove the following theorem.

\begin{theorem}
Suppose that $L\cong E^\varepsilon_6(3)$ with $\varepsilon\in\{+,-\}$.
If $G$ is a finite group such that $\Gamma(G)=\Gamma(L)$, then $G\cong E^\varepsilon_6(q)$.
\end{theorem}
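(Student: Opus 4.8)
The plan is to follow the now-standard route for recognizing a finite simple group by its Gruenberg--Kegel graph. Write $\pi=\pi(L)$. Using the order formula for $E^\varepsilon_6(3)$ one first computes
\[
\pi(E^{+}_6(3))=\{2,3,5,7,11,13,41,73,757\},\qquad
\pi(E^{-}_6(3))=\{2,3,5,7,13,19,37,41,61,73\},
\]
where $757=\Phi_9(3)$, $73=\Phi_{12}(3)$, $703=19\cdot37=\Phi_{18}(3)$, $121=11^2=\Phi_5(3)$, $61=\Phi_{10}(3)$. Next I would describe $\Gamma(L)$ explicitly, relying on the known spectrum of $E^{\pm}_6(q)$ and on the classification of finite simple groups with disconnected prime graph: $\Gamma(L)$ is disconnected --- for $\varepsilon=+$ the vertex $757$ is isolated, while for $\varepsilon=-$ the vertex $73$ is isolated and $\{19,37\}$ is a connected component. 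Along the way I record that $t(L)\ge3$ and $t(2,L)\ge2$, where $t(L)$ (resp.\ $t(2,L)$) denotes the largest size of a coclique of $\Gamma(L)$ (resp.\ of a coclique of $\Gamma(L)$ containing the vertex $2$): for instance $\{41,73,757\}$ and $\{2,757\}$ work when $\varepsilon=+$, and $\{19,61,73\}$ and $\{2,73\}$ when $\varepsilon=-$.

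Since $t(G)=t(L)\ge3$ and $t(2,G)=t(2,L)\ge2$, the next step is to invoke the structure theorem of Vasil'ev and Vdovin (which refines the Gruenberg--Kegel description of groups with disconnected prime graph): there is a finite nonabelian simple group $S$ with
\[
S\trianglelefteq\overline{G}:=G/K\le\Aut(S),
\]
where $K$ is the soluble radical of $G$; moreover, for every coclique $\rho$ of $\Gamma(G)$ with $|\rho|\ge3$ at most one prime of $\rho$ lies outside $\pi(S)$, and --- unless $S\cong Alt_7$ or $S$ belongs to a short list of groups $L_2(r)$ --- every prime non-adjacent to $2$ in $\Gamma(G)$ divides $|S|$. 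The groups $Alt_7$ and the relevant $L_2(r)$ are eliminated at once by combining $\pi(S)\subseteq\pi$ with the coclique bound. Hence $\pi(S)\subseteq\pi$, $t(S)\ge t(L)-1\ge2$, $\Gamma(S)$ is the subgraph of $\Gamma(L)$ induced on $\pi(S)$, the isolated primes $757$ (for $\varepsilon=+$) and $19,37,73$ (for $\varepsilon=-$) divide $|S|$, and $\Gamma(S)$ is disconnected; in particular $G$ is neither Frobenius nor $2$-Frobenius, and $S$ lies in the known list of simple groups with disconnected prime graph.

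The heart of the proof, and the step I expect to be the main obstacle, is to show $S\cong L$. Here I would run through the list of simple groups with disconnected prime graph. Alternating and sporadic groups are excluded because no such group has the isolated primes of $\Gamma(L)$ dividing its order while keeping all of its prime divisors inside the $9$-element (resp.\ $10$-element) set $\pi$. For $S$ of Lie type in characteristic $p$, the key leverage is that the isolated primes are large primitive prime divisors --- $\operatorname{ord}_{757}(3)=9$, $\operatorname{ord}_{73}(3)=12$, $\operatorname{ord}_{19}(3)=\operatorname{ord}_{37}(3)=18$ --- so $S$ must have a self-centralizing cyclic maximal torus of the corresponding cyclotomic order; comparing the cyclotomic factors of $|S|$ with the very small set $\pi$ forces $p=3$ with $S$ over the prime field and then restricts the type and rank drastically. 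The delicate part is to separate $E^\varepsilon_6(3)$ from the remaining characteristic-$3$ candidates of comparable size --- the groups $A^{\pm}_n(3)$, $B_n(3)$, $C_n(3)$, $D^{\pm}_n(3)$, $F_4(3)$, ${}^3D_4(3)$, $E_7(3)$, $E_8(3)$ --- which I would do by exhibiting, for each surviving $S\not\cong L$, either a prime dividing $|S|$ that is not in $\pi$, or an edge of $\Gamma(S)$ absent from $\Gamma(L)$, or a prime of $\pi\setminus\pi(S)$ which by the coclique bound must divide $|K|\cdot|\overline{G}/S|$ and therefore becomes adjacent in $\Gamma(G)$ to a prime it is non-adjacent to in $\Gamma(L)$. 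Each alternative contradicts $\Gamma(G)=\Gamma(L)$, leaving $S\cong E^\varepsilon_6(3)$.

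Finally I would identify $G$ with $L$. To see $K=1$: if $K\ne1$, take a prime $r\mid|K|$ and a chief factor $V$ of $G$ inside $K$, an elementary abelian $r$-group acted on by $S=L$; using suitable Frobenius (or $2$-Frobenius) subgroups of $L$ whose kernels or complements involve primes lying together with $r$ in a coclique of $\Gamma(L)$, together with coprime-action and Hall--Higman type arguments, one produces an element of $G$ of order $rt$ with $t\in\pi(L)$ and $rt\notin\omega(L)$, a contradiction. Thus $\overline{G}=G$ and $L\le G\le\Aut(L)$. When $\varepsilon=-$, $\operatorname{Out}({}^2E_6(3))$ is trivial ($3$ is prime, so there are no field automorphisms, and $\gcd(3,3+1)=1$), whence $G\cong{}^2E_6(3)$ immediately. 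When $\varepsilon=+$, $\operatorname{Out}(E_6(3))$ is generated by a graph automorphism $\gamma$ of order $2$, and I must exclude $G\cong E_6(3).\langle\gamma\rangle$; for this I would choose $\gamma$ with $C_{E_6(3)}(\gamma)=F_4(3)$ and pick $x\in F_4(3)$ of order $\Phi_{12}(3)=73$, so that $\gamma x$ has order $2\cdot73$, whereas $73$ is non-adjacent to $2$ in $\Gamma(E_6(3))$; hence $\Gamma(E_6(3).\langle\gamma\rangle)\ne\Gamma(E_6(3))$, and $\Gamma(G)=\Gamma(L)$ forces $G=L$. Therefore $G\cong E^\varepsilon_6(3)$, as claimed.
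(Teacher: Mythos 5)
Your overall skeleton matches the paper's: compute cocliques, apply Vasil'ev's structure theorem to get $S\unlhd G/K\le\Aut(S)$ with the primes non-adjacent to $2$ dividing $|S|$, identify $S$ with $L$, kill $K$, and exclude the outer involution via $C_L(\gamma)\cong F_4(3)\ni$ an element of order $73$. But the two steps you yourself flag as the heart of the argument are left as declarations of intent, and in both places the missing content is exactly where the real work (and the paper's non-elementary input) lies. First, the identification $S\cong L$: you say you ``would run through'' the simple groups with disconnected prime graph and eliminate the characteristic-$3$ candidates, but no case analysis is actually carried out; the paper instead gets this almost for free from Zavarnitsine's tables of simple groups whose largest prime divisor is $73$ (resp.\ $757$), combined with $t(S)\ge 4$ from the Vasil'ev--Vdovin coclique tables (e.g.\ $U_4(27)$ survives the divisibility test for $\varepsilon=-$ and is only excluded because $t(U_4(27))=3$). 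Without some such quantitative input your sketch does not yet eliminate the competitors. Second, $K=1$: your plan via ``Frobenius subgroups and Hall--Higman type arguments'' does not address the defining-characteristic case, which is the hard one. When $K$ is a $3$-group, a cyclic group of order $t$ coprime to $3$ can perfectly well act fixed-point-freely on it, so to force an edge you need either a Frobenius subgroup of $L$ with a $3'$-kernel and $t$ in the complement (you exhibit none, and elements of order $73$ in $E^\varepsilon_6(3)$ live in tori, i.e.\ naturally on the kernel side), or a representation-theoretic fact; the paper uses precisely the unisingularity of $F_4(3)$ (Guralnick--Tiep) for $p=3$, a result of Zavarnitsine on fixed points of order-$73$ elements of ${}^3D_4(3)$ to reduce to $p\in\{3,7\}$ resp.\ $\{3,13\}$, and a separate non-cyclic Sylow $5$-subgroup/Frobenius argument to remove $7$ and $13$. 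None of these ideas appears in your outline.

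Two smaller but genuine errors: $\operatorname{Out}({}^2E_6(3))$ is not trivial --- it has order $2$ (the involutory field/graph automorphism; compare $|\Aut(L):L|=2$ in the paper), so for $\varepsilon=-$ you cannot dismiss the almost simple case by triviality of the outer automorphism group; you must run the same $C_L(\gamma)\cong F_4(3)$, $73\in\pi(F_4(3))$ argument you give for $\varepsilon=+$ (it does work for ${}^2E_6(3)$ as well). Also, in $\Gamma({}^2E_6(3))$ the vertex $73$ is not isolated (it is adjacent to $7$); only $\{19,37\}$ forms a separate component, and $\Gamma(S)$ is a subgraph of, but not necessarily equal to, the induced subgraph of $\Gamma(L)$ on $\pi(S)$. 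These slips are repairable, but as written the proposal has real gaps at the socle identification and at the elimination of the $3$-part of $K$.
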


\begin{rmk}
This result was obtained during The Great Mathematical Workshop {\rm \cite{bmm}}.
\end{rmk}

\section{Preliminaries}
Recall that a subset of vertices of a graph is called a {\it coclique} if every two vertices of this subset are nonadjacent. Suppose that $G$ is a finite group. Denote by $t(G)$ the maximal size of a coclique in $\Gamma(G)$. If $2\in\pi(G)$, then $t(2,G)$ denotes the maximal size of a coclique containing vertex $2$ in $\Gamma(G)$. 	
\begin{lemma}[\cite{thvasilev}]\label{vas}
Suppose that $G$ is a finite group with $t(G)\geq3$ and $t(2,G)\geq2$. Then the following statements hold.
\begin{enumerate}
\item There exists a nonabelian simple group $S$ such that $S\trianglelefteq\overline{G} = G/K\leq\operatorname{Aut}(S)$, where $K$ is the solvable radical of $G$.
\item For every cocliue $\rho$ of $\Gamma(G)$ such that $|\rho|\geq3$, at most one prime of $\rho$ divides $|K|\cdot|\overline{G}/S|$. In particular, $t(S)\geq t(G)-1$.
\item One of the following two conditions holds:
\begin{itemize}
\item every prime $p\in\pi(G)$ nonadjacent to $2$ in $\Gamma(G)$ does not divide $|K|\cdot|\overline{G}/S|$. In particular, $t(2,S)\geq t(2,G)$;
\item $S\cong A_7$ or $L_2(q)$ for some odd $q$, and $t(S)=t(2,S)=3$.

\end{itemize}
\end{enumerate}
\end{lemma}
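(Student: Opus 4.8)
The plan is to combine the Gruenberg--Kegel classification of groups with disconnected prime graph with the generalized Fitting structure of $\overline G=G/K$ supplied by the classification of finite simple groups (CFSG). First I would record the decisive elementary fact that a finite \emph{solvable} group $H$ has $t(H)\le 2$: if $r,s,t$ were pairwise nonadjacent in $\Gamma(H)$, then the Hall $\{r,s,t\}$-subgroup (which exists by Hall's theorem) would be a solvable group whose prime graph is empty on three vertices, hence has three connected components, contradicting the Gruenberg--Kegel--Williams bound that the prime graph of a solvable group has at most two components. Since $t(G)\ge 3$, this forces $G$ to be nonsolvable, so $\overline G\ne 1$. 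As $K=\operatorname{Rad}(G)$ is the solvable radical, $\overline G$ has trivial solvable radical, so its generalized Fitting subgroup equals its socle: $\operatorname{Soc}(\overline G)=S_1\times\cdots\times S_m$ is a direct product of nonabelian simple groups with $C_{\overline G}(\operatorname{Soc}(\overline G))=1$, whence $\overline G$ embeds in $\Aut(\operatorname{Soc}(\overline G))$. I would also note the order-lifting principle: if $rs\in\omega(\overline G)$ then $rs\in\omega(G)$, since a preimage of an element of order $rs$ has order divisible by $rs$ and a suitable power then has order exactly $rs$.

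The combinatorial heart, which drives both (1) and (2), is a lemma controlling \emph{when two primes are nonadjacent}: if $p\in\pi(K)$ is nonadjacent to a prime $q$, then a Sylow $q$-subgroup acts fixed-point-freely on an appropriate $p$-section of $K$, producing a Frobenius configuration, while inside the solvable group $K$ itself Hall's theorem and the two-component bound apply. Feeding these constraints back, together with the observation that a prime-order element in one simple factor $S_i$ commutes with one in another factor $S_j$ (and that factor-permuting elements likewise combine with diagonal elements), one finds that \emph{distinct} factors, factor permutations, and the radical all manufacture adjacencies. The upshot is that any coclique $\rho$ with $|\rho|\ge 3$ can have at most one prime dividing $|K|\cdot|\overline G/\operatorname{Soc}(\overline G)|$, forcing at least two primes of $\rho$ into $\pi(\operatorname{Soc}(\overline G))$; if $m\ge 2$ these two would lie in a clique of $\Gamma(G)$, a contradiction. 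Hence $m=1$ and $\operatorname{Soc}(\overline G)=S$ is a single nonabelian simple group with $S\unlhd\overline G\le\Aut(S)$, which is statement (1); the counting bound is then exactly (2), and $t(S)\ge t(G)-1$ follows because deleting that single prime from a maximum coclique of $\Gamma(G)$ leaves, by the order-lifting principle applied to the section $S$, a coclique of $\Gamma(S)$.

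For statement (3) I would apply the same engine to cocliques \emph{through the vertex} $2$, where the hypothesis $t(2,G)\ge 2$ guarantees that $2$ is genuinely nonadjacent to some prime. Either no prime nonadjacent to $2$ divides $|K|\cdot|\overline G/S|$, in which case every such coclique descends to $\Gamma(S)$ and $t(2,S)\ge t(2,G)$; or the even prime interacts with the solvable radical through a Frobenius or $2$-Frobenius action, and one must decide for which simple $S$ a nilpotent-by-almost-simple extension can realize this. Invoking the Gruenberg--Kegel description of the nilpotent-by-almost-simple case together with the Vasiliev--Vdovin adjacency criterion for prime graphs of finite simple groups, one checks that the only survivors are $S\cong A_7$ and $S\cong L_2(q)$ for odd $q$, precisely the simple groups with $t(S)=t(2,S)=3$ admitting the offending configuration; this yields the stated dichotomy (using Schreier's conjecture to guarantee that $\overline G/S\le\operatorname{Out}(S)$ is solvable).

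I expect the principal obstacle to be twofold. The CFSG-based elimination in (3) is the genuinely hard step: pinning down the short list $\{A_7,\,L_2(q)\}$ requires the complete description of the prime graphs of finite simple groups and a delicate Frobenius/$2$-Frobenius analysis of how a nilpotent normal subgroup carrying the even part can be acted upon so as to alter adjacency with $2$. Secondarily, establishing the fixed-point-free/Frobenius lemmas that power the counting in (2) in a form uniform enough to cover the radical, the factor permutations, and cross-factor elements simultaneously demands care, although the solvable coclique bound and the socle reduction themselves are comparatively routine once CFSG and Schreier's conjecture are admitted.
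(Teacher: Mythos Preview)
The paper does not prove this lemma at all: it is quoted verbatim from Vasil$'$ev~\cite{thvasilev} as a preliminary result and used as a black box. So there is no ``paper's own proof'' to compare against; your proposal is an attempt to reprove a cited theorem rather than to fill in an argument the authors supply.

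That said, your outline is broadly faithful to how the result is actually established in the source (and its later refinements by Vasil$'$ev--Gorshkov). The ingredients you name---the Lucido-type bound $t(H)\le 2$ for solvable $H$ via Hall subgroups, the socle reduction using $C_{\overline G}(\operatorname{Soc}\overline G)=1$, the coclique-counting lemma forcing at most one prime of any $3$-coclique into $\pi(K)\cdot\pi(\overline G/S)$, and the CFSG case analysis isolating $A_7$ and $L_2(q)$ in part~(3)---are exactly the pillars of the published proof. Your identification of the hard step is also accurate: the dichotomy in (3) is where the real work lies and genuinely requires running through the simple groups. One place where your sketch is a bit too quick is the reduction to $m=1$: saying ``if $m\ge 2$ these two would lie in a clique'' glosses over the case where both surviving coclique primes land in the \emph{same} simple factor; the actual argument uses that every $S_i$ has even order, so cross-factor commuting of involutions together with $t(2,G)\ge 2$ is what kills extra factors. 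This is repairable along the lines you indicate, but it is not automatic from what you wrote.
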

\begin{lemma}{\rm\cite[Lemma 1]{semid}}\label{semid}
Suppose that $N$ is a normal elementary abelian subgroup of a finite group $G$ and  $H=G/N$. 
Define an automorphism $\phi: H\rightarrow$ Aut$(N)$ as follows: $n^{\phi(gN)}=n^g$. Then $\Gamma(G)=\Gamma(N\rtimes_{\phi}H)$.
\end{lemma}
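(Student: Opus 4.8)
The plan is to show that $\Gamma(G)$ and $\Gamma(\tilde G)$, where $\tilde G=N\rtimes_{\phi}H$, have the same vertex set and the same edges, by producing for each potential edge a criterion that depends only on the triple $(N,H,\phi)$ and not on the isomorphism type of the extension $1\to N\to X\to H\to 1$. Since $|G|=|N|\cdot|H|=|\tilde G|$, both groups have vertex set $\{p\}\cup\pi(H)$, where $p$ is the prime with $N$ elementary abelian of exponent $p$ (the case $N=1$ being trivial). So it remains to compare adjacency, and I split the pairs of primes into those avoiding $p$ and those of the form $\{p,r\}$ with $r\neq p$.

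For distinct primes $r,s\neq p$ I would show that in any group $X$ as above one has $rs\in\omega(X)$ if and only if $rs\in\omega(H)$. Indeed, a $p'$-element of $X$ maps to an element of the same order in $H$, since its cyclic span meets the $p$-group $N$ trivially; this gives the forward implication. Conversely, the preimage in $X$ of a cyclic subgroup of order $rs$ of $H$ is an extension of a cyclic group of order $rs$ by the $p$-group $N$ with $\gcd(rs,|N|)=1$, so Schur--Zassenhaus produces a complement and hence an element of order $rs$. As this criterion refers only to $H$, the edge $\{r,s\}$ is present in $\Gamma(G)$ exactly when it is present in $\Gamma(\tilde G)$.

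The crux is the pair $\{p,r\}$, where I would prove the extension-independent criterion: $pr\in\omega(X)$ if and only if either $pr\in\omega(H)$, or there exists $v\in H$ of order $r$ with $C_N(v)\neq 1$, the fixed-point subgroup of $\phi(v)$ acting on $N$. For the forward direction, write an element of order $pr$ as a product of commuting $p$- and $r$-parts and project to $H$: the $r$-part keeps its order, so either the $p$-part also survives in $H$, giving $pr\in\omega(H)$, or the $p$-part lies in $N$ and is a nontrivial fixed vector of the induced order-$r$ element, giving the second alternative. For the reverse direction, the second alternative lifts directly: Schur--Zassenhaus lifts $v$ to $\tilde v$ of order $r$ acting on $N$ as $\phi(v)$, and a nonzero element of $C_N(v)$ commutes with $\tilde v$, yielding order $pr$. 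The first alternative, $pr\in\omega(H)$, is the genuine difficulty: in $\tilde G$ it is immediate because $H$ embeds as a complement, but in a non-split $G$ I would pass to the preimage $G_0$ of a cyclic subgroup of order $pr$ in $H$, a solvable $\{p,r\}$-group with $G_0/N$ cyclic of order $pr$, and argue that $\Gamma(G_0)$ is connected, i.e.\ $pr\in\omega(G_0)$.

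To settle this last point I would invoke the classical structure theorem for solvable groups with disconnected Gruenberg--Kegel graph, namely that such a group is Frobenius or $2$-Frobenius, and check that both possibilities are incompatible with a cyclic quotient of order $pr$ over a normal $p$-group. In the Frobenius case the coprimality of kernel and complement together with the forced fixed-point-free action contradicts the abelianness of $G_0/N$; in the $2$-Frobenius case abelianizing pushes the order-$r$ part into a normal $p$-subgroup, which is impossible. Hence $\Gamma(G_0)$ is connected and $pr\in\omega(G_0)\subseteq\omega(X)$. Since each of the three criteria above is phrased purely in terms of $N$, $H$, and $\phi$, they take the same truth value for $G$ and for $\tilde G$, whence $\Gamma(G)=\Gamma(\tilde G)$. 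I expect the main obstacle to be exactly this first alternative: a direct Frattini-and-centralizer computation disposes of it whenever $p\nmid r-1$, but the residual case $p\mid r-1$ appears to require the disconnected-graph structure theorem rather than a bare lifting argument.
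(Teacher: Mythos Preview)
The paper does not prove this lemma; it is quoted verbatim from \cite[Lemma~1]{semid} and used as a black box, so there is no in-paper argument to compare against.

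Your proposal is correct and self-contained. The case split into $p'$-edges and $\{p,r\}$-edges, together with the extension-independent criteria you isolate, does the job. One remark on the ``first alternative'' $pr\in\omega(H)$: you reach for the Frobenius/2-Frobenius structure theorem, which works, but the same conclusion drops out of the standard coprime-action fact $C_{P/N}(\sigma)=C_P(\sigma)N/N$ for $\sigma$ of order $r$ acting on the $p$-group $P$. In your setup $G_0$ has a normal Sylow $p$-subgroup $P$ (the preimage of $\langle h_p\rangle$) with $P/N$ of order $p$, and any order-$r$ complement $\tilde h_r$ acts trivially on $P/N$ because $h_p$ and $h_r$ commute in $H$; hence $C_P(\tilde h_r)\neq 1$ and $pr\in\omega(G_0)$. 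This avoids the disconnected-graph classification entirely and also covers the residual case $p\mid r-1$ you flagged as the obstacle.
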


\begin{lemma}\label{graph}
Suppose that $L=E^\varepsilon_6(q)$, where $\varepsilon\in\{+,-\}$.
Then the following statements hold.
\begin{enumerate}
\item[$(1)$] $\pi(E^{-}_6(3))=\{2,3,5,7,13,19,37,41,61,73\}$ and $\Gamma(E^{-}_6(3))$ is the following{\rm:}
\centering{
\begin{tikzpicture}[node distance={9mm}, main/.style = {draw, circle, fill=white,minimum size=3pt, inner sep=1.2pt}]

\node[main] (2) {$2$};
\node[main] (41) [below of=2] {$41$};
\node[main] (61) [above of=2] {$61$};
\node[main] (3) [right of=61] {$3$};
\node[main] (5) [below  of=3] {$5$};
\node[main] (7) [right of=3] {$7$};
\node[main] (73) [above of=7] {$73$};
\node[main] (13) [below of=7] {$13$};
\node[main] (19) [right of=7] {$19$};
\node[main] (37) [right of=13] {$37$};
\draw (19)--(37);
\draw (2)--(41);
\draw (2)--(5);
\draw (2) to [out=-30,in=-135,looseness=1] (13);
\draw (2)--(61);
\draw (2)--(3);
\draw (2)--(7);
\draw (5)--(13);
\draw (13)--(7);
\draw (7)--(3);
\draw (61)--(3);
\draw (73)--(7);
\draw (3)--(5);
\draw (3)--(13);
\end{tikzpicture}
}
\item[$(2)$] $\pi(E^{+}_6(3))=\{2,3,5,7,11,13,41,73,757\}$ and $\Gamma(E^{+}_6(3))$ is the following{\rm:}
\centering{
\begin{tikzpicture}[node distance={10mm}, main/.style = {draw, circle, fill=white,minimum size=3pt, inner sep=1.2pt}]

\node[main] (2) {$2$};
\node[main] (5) [above left of=2] {$5$};
\node[main] (41) [below of=2] {$41$};
\node[main] (7) [above right of=5] {$7$};
\node[main] (13) [right of=7] {$13$};
\node[main] (73) [above of=13] {$73$};
\node[main] (3) [below right of=13] {$3$};
\node[main] (11) [below left of=3] {$11$};
\node[main] (757) [right of=3] {$757$};

\draw (2)--(7);
\draw (2)--(5);
\draw (2)--(13);
\draw (2)--(11);
\draw (2)--(3);
\draw (2)--(41);
\draw (13)--(73);
\draw (13)--(7);
\draw (3)--(13);
\draw (7)--(5);
\draw (3)--(5);
\draw (3)--(7);
\draw (3)--(11);

\end{tikzpicture}
}
\end{enumerate}
\end{lemma}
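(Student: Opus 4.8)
The plan is to treat Lemma~\ref{graph} as a computation in three stages: pin down $\pi(L)$, then the edges among the primes different from $3$, then the edges incident to the characteristic prime $3$.

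For $\pi(L)$ one starts from the order formulas $|E_6^{+}(q)| = \frac{1}{(3,q-1)}\,q^{36}(q^{2}-1)(q^{5}-1)(q^{6}-1)(q^{8}-1)(q^{9}-1)(q^{12}-1)$ and $|E_6^{-}(q)| = \frac{1}{(3,q+1)}\,q^{36}(q^{2}-1)(q^{5}+1)(q^{6}-1)(q^{8}-1)(q^{9}+1)(q^{12}-1)$, sets $q=3$ (so the central factor is trivial, as $(3,2)=1$), and factors each $3^{k}\mp 1$ through the cyclotomic values $\Phi_{1}(3)=2$, $\Phi_{2}(3)=4$, $\Phi_{3}(3)=13$, $\Phi_{4}(3)=10$, $\Phi_{5}(3)=121$, $\Phi_{6}(3)=7$, $\Phi_{8}(3)=82$, $\Phi_{9}(3)=757$, $\Phi_{10}(3)=61$, $\Phi_{12}(3)=73$, $\Phi_{18}(3)=703=19\cdot 37$. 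This gives exactly the two prime sets in the statement: $2$ comes from $\Phi_{1},\Phi_{2}$, while every other prime $r\in\pi(L)$ is a primitive prime divisor of $3^{k}-1$ for a single value of $k$ (for $E_6^{-}(3)$ one uses the usual convention for which $\Phi_{d}$ occur in the torus orders of the twisted group).

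For an edge between two distinct odd primes $r,s$ different from $3$, both corresponding elements are semisimple, so $rs\in\omega(L)$ if and only if $r$ and $s$ divide the order of a common maximal torus of $L$. I would list the maximal tori of $E_6(q)$ and of $^2E_6(q)$ --- their orders being the degree-$6$ products of the $\Phi_{d}(q)$ realised by the conjugacy classes of the relevant Weyl coset --- specialise to $q=3$, and go through the pairs. The non-edges come out of the same list; in particular the maximal torus of order $\Phi_{9}(3)=757$ of $E_6^{+}(3)$ (respectively of order $\Phi_{18}(3)=703$ of $E_6^{-}(3)$) is self-centralising and of odd order, which makes $757$ an isolated vertex (respectively joins $19$ and $37$ only to each other) and also rules out adjacency to $2$. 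The vertex $2$ needs slightly more care because it is at the same time a primitive prime divisor of $3^{1}-1$ and of $3^{2}-1$: here $2$ is adjacent to a primitive prime divisor of $3^{k}-1$ exactly when $\Phi_{k}(3)$ divides the order of a maximal torus of even order, which is again read off the torus list.

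For the edges at the characteristic prime one applies the known adjacency criterion for $3$: $3$ is adjacent to a primitive prime divisor $r$ of $3^{k}-1$ precisely when $L$ has a proper parabolic subgroup with nontrivial unipotent radical whose Levi factor carries a torus of order divisible by $\Phi_{k}(q)$; since only a handful of values of $k$ are involved, this is a finite check, and combining its outcome with the previous two stages reproduces both displayed graphs. A shorter route is available: the spectrum $\omega(E_6^{\pm}(q))$ is completely described in \cite{Kon07,Grech15,Zvez16}, so the whole lemma follows by specialising those descriptions to $q=3$ and transcribing the adjacencies. I expect the only genuine difficulty to be the bookkeeping in the middle stage --- enumerating the maximal tori of $E_6$ and $^2E_6$ over $\mathbb{F}_{3}$ exhaustively and checking that no numerical accident at $q=3$ creates an edge not shown, with the prime $2$ as the most delicate case; this is routine but error-prone and is safest to confirm by a direct machine computation of $\omega(L)$.
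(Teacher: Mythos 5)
Your first two stages match what the paper actually does: its entire proof is a one-line application of the Vasil$'$ev--Vdovin adjacency criterion \cite[Prop.~2.5, 3.2, 4.5]{VasVd}, and your computation of $\pi(L)$ via the order formulas and the values $\Phi_d(3)$ is correct, while your torus criterion for two primes distinct from $3$ is exactly the content of the cited propositions (and is exact here because $\gcd(3,q\mp1)=1$ at $q=3$, so $E_6^{\pm}(3)$ coincides with its simply connected version and has trivial center). Your ``shorter route'' of specializing known descriptions of $\omega(E_6^{\pm}(q))$ is essentially the same move as the paper's citation and would be acceptable.

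The genuine gap is the third stage. The criterion you state for adjacency to the characteristic prime --- $3\sim r$ for $r$ a primitive prime divisor of $3^k-1$ if and only if some proper parabolic has a Levi carrying a torus of order divisible by $\Phi_k(q)$ --- is false in the ``if'' direction: an $r$-element of a Levi can act fixed-point-freely on the unipotent radical and have a maximal torus as its full centralizer in $L$. Concretely, $41$ is a primitive prime divisor of $3^8-1$ and $\Phi_8(3)=82$ divides the order of a maximal torus of the $D_5$-Levi of $E_6(3)$, so your test outputs the edge $\{3,41\}$; but $3$ and $41$ are nonadjacent, since every maximal-rank subsystem subgroup of $E_6$ with nontrivial semisimple part (of type $A_1$, $A_1A_1$ or $A_2$ together with its central torus, the latter lying in tori of $A_5$-, $A_3$- or $A_2A_2$-type subgroups) has central torus of order coprime to $41$, so every semisimple element of order $41$ is regular. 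The correct criterion is the one the paper cites, phrased via centralizers of semisimple elements, equivalently via maximal-rank subsystem subgroups whose central torus order is divisible by $r$. Conversely, the honest way to certify edges at $3$ is through such subsystem subgroups, e.g. $SL_6(3)\circ SL_2(3)$ (type $A_5A_1$) in $E_6(3)$, its twisted analogue $SU_6(3)\circ SL_2(3)$ in ${}^2E_6(3)$, or $\Omega_9(3)\le F_4(3)\le L$; note that these subgroups also produce elements of order $15$ (and of order $39$ when $\varepsilon=-$), so carrying this stage out with the correct criterion forces a careful re-examination of the neighbourhood of $3$ in the displayed figures rather than the assumed agreement you describe. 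Since the edges at the characteristic prime are precisely the delicate content of the lemma, this step cannot be deferred to ``confirm by a machine computation'' inside the proof.
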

\begin{proof}
Apply a criterion of adjacency of two vertices in $\Gamma(E_6^{\pm}(q))$ \cite[Prop~2.5, Prop~3.2, and Prop~4.5]{VasVd}.
\end{proof}

\section{Proof of Theorem}	

Consider a finite group $G$ such that $\Gamma(G)=\Gamma(L)$, where $L=E^\varepsilon_6(3)$ with $\varepsilon\in\{-,+\}$.
Using Lemma~\ref{graph}, we find that $t(G)=t(L)=5$ and $t(2,G)=t(2,L)=3$. It follows from Lemma~\ref{vas} that $S\trianglelefteq\overline{G}=G/K\leq\operatorname{Aut}(S)$, where $K$ is the solvable radical of $G$ and $S$ is a nonabelian simple group.
Denote by $\Omega$ the set of primes in $\pi(L)$ nonadjacent to 2 in $\Gamma(L)$.
Then $\Omega=\{19,37,73\}$ if $\varepsilon=-$ and $\Omega=\{73, 757\}$ if $\varepsilon=+$.
Lemma~\ref{vas} implies that $t(S)\geq4$, and primes from $\Omega$ belong to $\pi(S)$ and do not divide $|G|/|S|$. The proof of the theorem is split into several lemmas.

\begin{lemma} $K$ is nilpotent.
\begin{proof} Consider the action of $G$ on $K$ by conjugation.
Denote $r=37$ if $\varepsilon=-$ and $r=757$ if $\varepsilon=+$. Then $r\not\in\pi(K)$.
Take an element $x\in G$ of order $r$. Since $r$ is nonadjacent to all the vertices of
$\pi(K)$, the action of $x$ on $K$ is fixed-point free. By Thompson's theorem, $K$ is nilpotent.
\end{proof}	
\end{lemma}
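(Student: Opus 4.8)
The plan is to produce a fixed-point-free automorphism of $K$ of prime order and then apply Thompson's theorem: a finite group admitting a fixed-point-free automorphism of prime order is nilpotent.

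The key is to single out a prime $r \in \pi(G)$ which is isolated, or almost isolated, in $\Gamma(L)$. Reading off the graphs in Lemma~\ref{graph}, the vertex $757$ is isolated when $\varepsilon = +$, and the vertex $37$ has $19$ as its only neighbour when $\varepsilon = -$; accordingly set $r = 757$ or $r = 37$. In either case $r \in \Omega$, so $r \notin \pi(K)$, and the unique possible neighbour of $r$ (namely $19$, in the case $\varepsilon = -$) likewise lies in $\Omega$, hence is also absent from $\pi(K)$. Therefore $r$ is nonadjacent in $\Gamma(G) = \Gamma(L)$ to every prime dividing $|K|$.

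Since $r \in \pi(L) = \pi(G)$, pick $x \in G$ with $|x| = r$ and let it act on the normal subgroup $K$ by conjugation. If $C_K(x) \neq 1$, then some nontrivial $y \in C_K(x)$ has order divisible by a prime $p \in \pi(K)$; a suitable power of $y$ then has order $p$ and commutes with $x$, producing an element of order $rp$ in $G$ and contradicting the nonadjacency of $r$ and $p$. Hence $C_K(x) = 1$, and since $r$ is prime the automorphism of $K$ induced by $x$ is either trivial (forcing $K = C_K(x) = 1$) or fixed-point-free of order $r$, in which case Thompson's theorem gives that $K$ is nilpotent. The only point needing care is the selection of $r$: one must read off from the explicit Gruenberg--Kegel graphs of $E_6^{\pm}(3)$ that $\Omega$ contains a vertex whose neighbours, if any, again lie in $\Omega$, so that the induced action on $K$ really is fixed-point-free; the remainder is a routine spectrum argument.
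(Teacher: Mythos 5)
Your proposal is correct and follows essentially the same route as the paper: choose $r=37$ (for $\varepsilon=-$) or $r=757$ (for $\varepsilon=+$), use the fact that primes of $\Omega$ do not divide $|K|$ to see that $r$ is nonadjacent in $\Gamma(G)$ to every prime of $\pi(K)$, deduce that an element of order $r$ acts fixed-point-freely on $K$, and apply Thompson's theorem. You merely spell out two details the paper leaves implicit (that the sole neighbour $19$ of $37$ lies in $\Omega$, and the degenerate case of a trivial induced automorphism), which is fine.
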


\begin{lemma}\label{l:S=2E6}
If $\varepsilon=-$, then $S\cong L$.
\end{lemma}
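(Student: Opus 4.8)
The plan is to identify $S$ via the Classification of Finite Simple Groups, using only the information already at hand: $t(S)\ge 4$; $\{19,37,73\}\subseteq\pi(S)\subseteq\pi(L)=\{2,3,5,7,13,19,37,41,61,73\}$; and, since $t(A_7)=t(L_2(q))=3<4$, Lemma~\ref{vas}(3) also gives $t(2,S)\ge 3$. First I would dispose of the non-Lie-type possibilities. If $S\cong A_n$, then $73\in\pi(S)$ forces $n\ge 73$, whence $71\in\pi(A_n)\setminus\pi(L)$, a contradiction; and if $S$ is sporadic, then $73\nmid|S|$ for every sporadic group, again impossible. Hence $S$ is a simple group of Lie type over $\mathbb F_q$ with $q=p^k$ and $p\in\pi(S)\subseteq\pi(L)$.

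Next I would pin down $q=3$. For a prime $r\nmid u$ write $e_u(r)$ for the multiplicative order of $u$ modulo $r$; a short computation gives $e_3(73)=12$ and $e_3(19)=e_3(37)=18$ (indeed $73=\Phi_{12}(3)$ and $19\cdot 37=\Phi_{18}(3)$), whereas for every admissible $p\ne 3$ at least one of $e_p(73),e_p(19),e_p(37)$ is large --- for instance $e_2(37)=36$, $e_5(73)=72$, $e_7(73)=24$. Since every prime $r\ne p$ with $e_p(r)$ bounded by a quantity determined by the (twisted) Lie rank of $S$ lies in $\pi(S)$, membership of $19,37,73$ in $\pi(S)$ forces the rank of $S$ up, and then the order formula of $S$ yields a primitive prime divisor of some $p^m-1$ outside $\pi(L)$, computed through cyclotomic polynomials (e.g. $\Phi_7(3)=1093$, $\Phi_{14}(3)=547$, $\Phi_{22}(3)=67\cdot 661$, and their analogues in other characteristics). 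The case $k\ge 2$ is excluded the same way: already $11\mid 3^{10}-1$ and $757=\Phi_9(3)\mid 3^9-1$ lie outside $\pi(L)$, and the rank needed to realise $73$ over $\mathbb F_{3^k}$ is large enough to drag such a prime into $\pi(S)$. This leaves $S$ of Lie type over $\mathbb F_3$.

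Finally I would scan the Lie-type groups over $\mathbb F_3$, using that $|S|$ must be divisible by both $\Phi_{12}(3)=73$ and $\Phi_{18}(3)=19\cdot 37$. The groups $F_4(3)$, ${}^3D_4(3)$, $E_6(3)$ (where $\Phi_{18}\nmid|S|$, so $19\notin\pi(S)$; recall $\pi(E_6(3))=\{2,3,5,7,11,13,41,73,757\}$ by Lemma~\ref{graph}) and the classical groups of rank below $12$ miss $19$ or $37$; $E_7(3)$, $E_8(3)$ and the classical groups of rank at least $12$ acquire a prime outside $\pi(L)$, for instance $1093=\Phi_7(3)\mid|E_7(3)|$ or $67\mid\Phi_{22}(3)\mid|U_{12}(3)|$; and ${}^2E_6(3)$ fits, with $\pi({}^2E_6(3))=\pi(L)$. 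Hence $S\cong{}^2E_6(3)=L$.

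The principal obstacle is the completeness and bookkeeping of this Lie-type analysis: one must pass through every characteristic and every family --- classical and exceptional, split and twisted --- keeping track of which cyclotomic values divide $|S|$ and verifying that the condition $\{19,37,73\}\subseteq\pi(S)\subseteq\pi(L)$ fails in all cases but one. In practice this is best organised by quoting existing tabulations of the orders and prime graphs of finite simple groups (such as the data behind \cite{VasVd}) rather than recomputing; a little extra care is needed for the small-rank twisted groups over $\mathbb F_3$, where one may have to invoke the precise adjacency criterion rather than mere divisibility.
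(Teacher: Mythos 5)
There is a genuine gap, and it is concentrated in your claim that the case $k\ge 2$ (extension fields) can be excluded by purely arithmetic means, i.e.\ by exhibiting a prime divisor of $|S|$ outside $\pi(L)$. That claim is false in characteristic $3$: the group $U_4(27)=U_4(3^3)$ satisfies $\{19,37,73\}\subseteq\pi(U_4(27))=\{2,3,5,7,13,19,37,73\}\subseteq\pi(L)$, since $19\cdot 37\mid 27^3+1$ and $73\mid 27^4-1$, so only a modest twisted rank over $\mathbb F_{27}$ is needed to realise all three primes of $\Omega$ and no prime outside $\pi(L)$ is dragged in. Consequently your reduction ``this leaves $S$ of Lie type over $\mathbb F_3$'' does not hold, your final scan never meets $U_4(27)$, and no comparison of prime sets can ever eliminate it. The paper's proof handles exactly this case: it invokes Zavarnitsine's classification of simple groups with narrow prime spectrum \cite{zav2} (with $73$ as the largest prime of $\pi(S)$), which leaves alternating groups of degree $\ge 73$ (killed by $71$, as you do), $L_2(73)$ (killed by $19$), and a finite table in which only $U_4(27)$ and $E_6^-(3)$ have order divisible by $19\cdot37\cdot73$ and by no prime exceeding $73$; then $U_4(27)$ is discarded because $t(U_4(27))=3$ by \cite[Table~2]{vdovin}, whereas Lemma~\ref{vas} forces $t(S)\ge 4$. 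Notice that you list the coclique bound $t(S)\ge 4$ at the outset but never use it in your Lie-type analysis --- yet it is precisely the ingredient needed to finish, since the arithmetic conditions alone admit two survivors.

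A secondary, smaller weakness: the central step ``membership of $19,37,73$ in $\pi(S)$ forces the rank up, and then the order formula yields a primitive prime divisor outside $\pi(L)$'' is asserted rather than proved, uniformly over all characteristics $p\in\{2,5,7,13,41,61,73,\dots\}$ and all families, split and twisted. This is in effect a re-derivation of the classification you would be better off citing (as the paper does with \cite{zav2}); as a blind sketch it is acceptable in spirit, but as written it is not a proof, and, as the $U_4(27)$ example shows, the heuristic ``high multiplicative order $\Rightarrow$ high rank $\Rightarrow$ extraneous prime'' genuinely fails in at least one case, so each case must be checked against tabulated data and, where the prime sets do not decide the issue, against coclique sizes.
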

\begin{proof}		
Observe that 73 is the largest prime in $\pi(S)$.
Inspecting groups from \cite[Table 1]{zav2}, we find that
$U_4(27)$ and $E^{-}_6(3)$ are the only simple groups whose order is divisible by $19\cdot37\cdot73$
and is not divisible by primes greater than 73.
According to \cite[Table 2]{vdovin}, $t(U_4(27))=3$ and hence $S\cong E^{-}_6(3)$, as claimed.
\end{proof}	

\begin{lemma} If $\varepsilon=+$, then $S\cong L$.
\end{lemma}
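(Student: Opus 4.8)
The plan is to mirror the proof of Lemma~\ref{l:S=2E6}. Since $757\in\Omega\subseteq\pi(S)\subseteq\pi(G)=\pi(L)$ and $757=\max\pi(L)$ by Lemma~\ref{graph}, the prime $757$ is the largest prime in $\pi(S)$. Applying the classification of finite simple groups with a prescribed largest prime divisor from \cite{zav2}, we obtain that $S$ is an alternating group $Alt_n$ with $n\geq 757$, or $L_2(757)$, or one of the groups listed in \cite[Table~1]{zav2}.

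First I would discard the first two options. If $S\cong Alt_n$ with $n\geq 757$, then $\pi(S)$ contains every prime not exceeding $757$; in particular $71\in\pi(S)\setminus\pi(G)$ since $71\notin\pi(L)$ by Lemma~\ref{graph}, a contradiction. If $S\cong L_2(757)$, then, computing $|L_2(757)|=\tfrac12\cdot 757\cdot(757^2-1)$, one checks that $73\nmid|L_2(757)|$, whereas $73\in\Omega\subseteq\pi(S)$; again a contradiction.

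It remains to inspect \cite[Table~1]{zav2}. Using $\{73,757\}\subseteq\pi(S)$ and $\pi(S)\subseteq\pi(L)=\{2,3,5,7,11,13,41,73,757\}$ --- so $|S|$ is divisible by $73\cdot757$ and by no prime outside $\pi(L)$ --- I expect the table to reduce to a short list consisting of $E_6(3)$ together with a few groups over $\mathbb{F}_{27}$ (note that $757=\Phi_3(27)=\Phi_9(3)$ is a primitive prime divisor that also occurs in small groups such as $L_4(27)$, while, for instance, $Sp_6(27)$ is excluded because $19$ and $37$ divide its order). The spurious candidates are then removed with the help of the bound $t(S)\geq 4$ from Lemma~\ref{vas}: consulting \cite[Table~2]{vdovin} one finds $t(L_4(27))=3$, and similarly for the remaining small candidates, so that $S\cong E_6(3)=L$, as desired.

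The step I expect to be the main obstacle is this last one: carefully enumerating all entries of \cite[Table~1]{zav2} whose order is divisible by $73\cdot757$ and has no prime divisor outside $\pi(L)$, and then checking for each such group $S'\not\cong E_6(3)$ that $t(S')<4$ or $\pi(S')\not\subseteq\pi(L)$. It is also worth recording the easy fact that $757\nmid|E^{-}_6(3)|$ (as $\Phi_9$ does not occur in the order of $E^{-}_6(q)$), so that this lemma really pins $S$ down to $E_6(3)$; everything else is a verbatim adaptation of the $\varepsilon=-$ argument.
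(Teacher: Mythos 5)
Your proposal is correct and follows essentially the same route as the paper: $757$ is the largest prime in $\pi(S)$, Zavarnitsine's classification (the relevant list is \cite[Table~3]{zav2}, not Table~1) reduces $S$ to alternating groups of degree at least $757$, $L_2(757)$, or a short list of groups over $\mathbb{F}_{27}$, $\mathbb{F}_{3^9}$, etc., the alternating case is killed by a prime outside $\pi(L)$ (the paper uses $17$ where you use $71$), and the remaining candidates are eliminated exactly as you anticipate. The only difference is that the paper disposes of every non-$E_6(3)$ candidate, including $L_2(757)$, uniformly by $t(S)\leq 3$ via \cite[Tables~2-4]{vdovin}, rather than mixing in the divisibility filter $73\cdot757\mid|S|$, so the enumeration you flag as the main obstacle is just a table lookup.
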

\begin{proof}	
Note that 757 is the largest prime in $\pi(S)$.
By \cite[Table~3]{zav2}, $S$ is either an alternating group of degree $n\geq757$,
or $L_2(757)$, or a group from the following list:
\begin{multline*}
L_3(27), L_4(27), L_2(3^9), G_2(27), L_2(757^2), S_4(757), E_6(3), L_3(3^6), S_6(27), O_7(27), \\ O^{+}_8(27),U_6(27).
\end{multline*}

If $S$ is an alternating group of degree at least 757, then $17\in\pi(S)\setminus\pi(G)$.
In other cases if $S\not\cong E_6(3)$, then $t(S)\leq3$ according to \cite[Tables~2-4]{vdovin}.
Therefore, $S\cong E_6(3)$, as claimed.
\end{proof}		

\begin{lemma}\label{almost}
$G/K\cong L$.
\end{lemma}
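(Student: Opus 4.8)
The plan is to show that the outer part $\overline{G}/S = (G/K)/S$ is trivial, so that combined with the previous lemmas (which give $S\cong L$) we get $G/K\cong L$. Since $S\unlhd\overline{G}\leq\Aut(S)$ with $S\cong E^\varepsilon_6(3)$, the quotient $\overline{G}/S$ embeds into $\mathrm{Out}(S)$. For $L=E^\varepsilon_6(q)$ with $q=3=p$, the outer automorphism group is $\mathrm{Out}(L)\cong d\times \langle\text{field}\rangle\times\langle\text{graph}\rangle$ where $d=\gcd(3,q-\varepsilon 1)$; here field automorphisms are trivial since $q=p$ is prime, and the graph part is either trivial ($\varepsilon=-$) or of order $2$ ($\varepsilon=+$), while $d=\gcd(3,q-\varepsilon1)$ is $3$ when $\varepsilon=-$ (since $3\mid 3+1$? no — $\gcd(3,2)=1$) — in any case one reads off from the literature that $\mathrm{Out}(E^\varepsilon_6(3))$ is a $\{2,3\}$-group of small order. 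So $\overline{G}/S$ is a $\{2,3\}$-group.

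First I would record the structure of $\mathrm{Out}(S)$ explicitly for $S\cong E^\varepsilon_6(3)$ and conclude $\pi(\overline{G}/S)\subseteq\{2,3\}$. By part (2) of Lemma~\ref{vas}, for the coclique $\Omega$ (of size $3$ when $\varepsilon=-$, size $2$ when $\varepsilon=+$; note $|\Omega|\geq 3$ is needed, but $\Omega$ can be enlarged to a coclique of size $\geq 3$ in $\Gamma(L)$ — indeed $t(L)=5$, and any maximal coclique contains $\Omega$ together with e.g. $41$ and one more prime), at most one prime of that coclique divides $|K|\cdot|\overline{G}/S|$; we already know the primes of $\Omega$ do not divide $|G|/|S|\geq|\overline{G}/S|$. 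This does not immediately kill $2$ or $3$, so the real work is to show that neither $2$ nor $3$ divides $|\overline{G}/S|$, i.e. that no nontrivial diagonal, graph, or field automorphism is realized in $\overline{G}$.

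The key step is to use the Gruenberg--Kegel graph constraint itself: if some prime $t\in\{2,3\}$ divided $|\overline{G}/S|$, then $\overline{G}$ would contain an element $g$ of order $t$ inducing an outer automorphism of $S$, and — crucially — we could then find an element of $S\langle g\rangle$ whose order creates an edge in $\Gamma(G)$ that is absent in $\Gamma(L)$, or (in the case $t=3$, which is also the defining characteristic) examine centralizers of outer automorphisms. Concretely, I would invoke the known structure of centralizers of graph and diagonal automorphisms of $E^\varepsilon_6(3)$ (e.g. $C_S(\text{graph}) $ contains $F_4(3)$ or $\mathrm{Sp}_8(3)$-type subgroups, and $C_S(\text{diagonal})$ is a group of type $E_6$), and show that extending $S$ by such an automorphism forces an element of order $rs$ for some nonadjacent pair $r,s$ in $\Gamma(L)$ — for instance producing adjacency between two of the ``peripheral'' primes like $37$ and something, or between a prime in $\Omega$ and a prime dividing $|\overline{G}/S|$, contradicting $\Gamma(G)=\Gamma(L)$. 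Since field automorphisms are absent ($q=p$), only diagonal ($t=3$) and, for $\varepsilon=+$, graph ($t=2$) automorphisms must be excluded, which is a bounded check.

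The main obstacle I anticipate is the diagonal automorphism of order $3$ when $d=\gcd(3,q-\varepsilon1)=3$: since $3$ is the defining characteristic, adjacency of $3$ with other primes in $\Gamma(L)$ is already ``generous'' (many primes are joined to $3$), so one cannot simply produce a forbidden edge through $3$; instead one must look at the element orders inside the extension $S.3$ more carefully, using that a diagonal automorphism of $E^\varepsilon_6(q)$ has centralizer whose order is divisible by specific primes, and check whether that forces a new edge among the primes \emph{other} than $3$ (via an element of order $3r$ where $r$ was nonadjacent to $3$, or an element of order $rs$ with $r,s$ both coprime to $3$). I would handle this by citing the explicit description of semisimple element centralizers / maximal tori of the extended group $\mathrm{Inndiag}(E^\varepsilon_6(3))$ and matching against the forbidden non-edges of $\Gamma(L)$ listed in Lemma~\ref{graph}; once both $2\nmid|\overline{G}/S|$ and $3\nmid|\overline{G}/S|$ are established, $\overline{G}/S=1$ and hence $G/K=\overline{G}\cong S\cong L$, as required.
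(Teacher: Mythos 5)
Your overall strategy is the right one and, in its correct part, coincides with the paper's: bound $\overline{G}/S$ by $\mathrm{Out}(S)$ and kill any outer involution by noting that its centralizer in $S$ contains $F_4(3)$ (or a $C_4(3)$-type group), whose order is divisible by $73$, so the extension would contain an element of order $2\cdot 73$, an edge absent from $\Gamma(L)$. The paper does exactly this: since $d=\gcd(3,3-\varepsilon 1)=1$ and $q=3$, one has $|\Aut(L):L|=2$, so either $G/K\cong L$ or $G/K\cong\Aut(L)$; in the latter case an involutory graph automorphism $\gamma$ has $C_L(\gamma)\cong F_4(3)$ by \cite[Proposition~4.9.2]{GLS}, and $73\in\pi(F_4(3))$ gives the contradiction.

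However, your proposal has a genuine gap, and it sits precisely where the work is. You assert that ``field automorphisms are absent since $q=p$'' and that ``the graph part is trivial for $\varepsilon=-$'', and you conclude that only a diagonal automorphism of order $3$ (both cases) and a graph automorphism of order $2$ (only $\varepsilon=+$) need to be excluded. This is wrong on both counts. First, $d=\gcd(3,q-\varepsilon 1)=\gcd(3,2)$ or $\gcd(3,4)$ equals $1$ in both cases, so there are \emph{no} diagonal automorphisms at all; your ``main obstacle'' paragraph, which is anyway only a vague plan (citing Inndiag tori and ``matching against forbidden non-edges'' is not an argument), addresses a non-existent case. Second, and more seriously, ${}^2E_6(3)$ \emph{does} have an outer automorphism of order $2$: the group is defined via the field $\mathbb{F}_{q^2}=\mathbb{F}_9$, whose Frobenius induces a nontrivial involutory (graph/graph-field, in the usual terminology) outer automorphism, and $\mathrm{Out}({}^2E_6(3))$ has order $2$. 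Your plan explicitly declares that for $\varepsilon=-$ there is no order-$2$ outer automorphism to exclude, so the twisted case — which is one of the two cases of the theorem and requires exactly the $C_L(\gamma)\cong F_4(3)$, $73\in\pi(F_4(3))$ argument — is left unproved. Fixing the proposal amounts to computing $\mathrm{Out}(E_6^\varepsilon(3))$ correctly (order $2$ in both cases, no diagonal part) and then running your $F_4(3)$/order-$73$ centralizer argument uniformly for the involutory outer automorphism in both the split and twisted cases.
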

\begin{proof}
Note that $|\operatorname{Aut}(L):L|=2$, so either $G/K\cong L$ or
$G/K\cong\operatorname{Aut}(L)$. Suppose that $G/K\cong\operatorname{Aut}(L)$.
Let $\gamma$ be a graph automorphism of order 2 of $L$. By~\cite[Proposition~4.9.2.]{GLS},
we have $C_L(\gamma)\cong F_4(3)$. Since $73\in\pi(F_4(3))$ and vertices 2 and 73 are nonadjacent in $\Gamma(G)$, we arrive at a contradiction.
\end{proof}

\begin{lemma}\label{l:pi3-7}
If $\varepsilon=-$, then $\pi(K)\subseteq \{3,7\}$
and if $\varepsilon=+$, then $\pi(K)\subseteq \{3,13\}$.
\end{lemma}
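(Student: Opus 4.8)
The plan is to bound $\pi(K)$ by ruling out, one prime at a time, every element of $\pi(L)$ that is not in the claimed set. By Lemma~\ref{l:S=2E6} (and its $\varepsilon=+$ analogue) we have $S\cong L$, and by Lemma~\ref{almost} we have $G/K\cong L$, so $\overline{G}=G/K$ is $L$ or $\Aut(L)$; in either case $L\le\overline{G}$ acts on $K$ by conjugation (the action factoring through $\overline{G}$). The key tool is the following standard observation, which I would isolate first: if a prime $p$ divides $|K|$, then for every prime $s\in\pi(L)$ that is \emph{nonadjacent} to $p$ in $\Gamma(L)=\Gamma(G)$, any element of order $s$ in $G$ must act fixed-point-freely on the (nontrivial) $p$-part of $K$ — otherwise its centralizer in $K$ would contain an element of order $p$, giving an element of order $ps$ in $G$ and hence an edge $p\,s$ in $\Gamma(G)$, contradiction. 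In particular, since $K$ is nilpotent (first lemma of the section), we may pass to the Frattini quotient of the $p$-part and apply Lemma~\ref{semid}: $\Gamma(G)=\Gamma(V\rtimes \overline{G})$ for a suitable $\mathbb{F}_p\overline{G}$-module $V$, and we will need the action of $L$ on $V$ to admit fixed-point-free elements of prescribed orders.

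The first step is to eliminate the ``large'' primes from $\pi(K)$. For $\varepsilon=-$, the primes $19,37,41,61,73$ lie in cocliques with $2$ and moreover each is nonadjacent to several others; concretely, $37$ is adjacent to \emph{nothing} except $19$, $73$ is adjacent only to $7$, $61$ only to $2$ and $3$, $41$ only to $2$, $19$ only to $2$ and $37$. If, say, $19\in\pi(K)$, take an element $x\in G$ of order $73$ (which exists since $73\in\pi(S)\subseteq\pi(G)$ and $73\nmid |K|$ as $73\in\Omega$): since $73$ and $19$ are nonadjacent, $x$ acts fixed-point-freely on the $19$-part of $K$, so $73$ divides $19^m-1$ for the relevant $m$ — but one then derives a contradiction by exhibiting, inside $L$ acting on such a module, an element whose order is nonadjacent to $19$ in $\Gamma(L)$ yet which must have a nonzero fixed vector. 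The cleanest uniform way is: $5$ is nonadjacent to $19,37,41,61,73$ in $\Gamma(E_6^-(3))$, and $2$ is nonadjacent to $19,37,73$; since $5,2\notin\Omega$ we must argue more carefully, so instead I would use that $37\in\pi(G)\setminus\pi(K)$ together with the fact that $37$ is nonadjacent to every prime in $\{2,3,5,7,13,41,61,73\}$ — hence an element of order $37$ acts fixed-point-freely on the $p$-part of $K$ for \emph{every} $p\in\pi(K)\setminus\{19,37\}$, forcing $\pi(K)\subseteq\{3,5,7,13,19\}$ already. A parallel run with $r=757$ in the $E_6^+(3)$ case (noting $757$ is nonadjacent to everything except $3$ and $13$, and $757\notin\pi(K)$) gives $\pi(K)\subseteq\{3,5,7,11,13\}$ immediately.

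The remaining and genuinely delicate step is to remove $5$ (in both cases), $13$ in the $\varepsilon=-$ case being \emph{kept}, and $7$ in the $\varepsilon=+$ case, and $11$ in the $\varepsilon=+$ case, using module-theoretic arguments for $L$ acting on elementary abelian $p$-groups. Here the structure of the graph is exploited again: for $\varepsilon=-$, suppose $5\in\pi(K)$; pick primes nonadjacent to $5$ — these are $7,19,37,41,61,73$ — none of which lies in $\Omega$ except $19,37,73$, but $73$ and $37$ are guaranteed not to divide $|K|$, so elements of orders $37$ and $73$ act fixed-point-freely on the $5$-part of $K$, which forces $\mathrm{lcm}(37,73)\mid 5^k-1$ for the dimension $k$ of the Frattini quotient; this gives a large lower bound on $k$, after which one invokes known bounds on the minimal faithful $\mathbb{F}_5$-representations of $L=E_6^-(3)$ (or of $\Aut(L)$) versus the order of $5^k-1$ needed, reaching a contradiction. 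The analogous computations dispose of $7$ (nonadjacent to $11,41,73,757$, the last not dividing $|K|$) and $11$ (nonadjacent to $5,7,13,41,73,757$) in the $E_6^+(3)$ case, and of $5$ there as well. The main obstacle I anticipate is precisely this last module-theoretic bookkeeping: one must either cite a precise lower bound for the dimension of a nontrivial $\mathbb{F}_p$-module for $E_6^{\pm}(3)$ and show it is incompatible with the orders $p^k-1$ forced by the fixed-point-free elements, or argue directly that $E_6^{\pm}(3)$ has no element acting without fixed points on any small-dimensional $\mathbb{F}_p$-module — and getting the quantitative comparison right (which primes divide $|E_6^{\pm}(3)|$ to which power, versus $p^k-1$) is where the real work lies, whereas the reduction to this situation via Lemma~\ref{semid} and the fixed-point-free principle is routine.
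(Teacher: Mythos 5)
Your general framework (nilpotent $K$, passing to the Frattini quotient of the $p$-part, Lemma~\ref{semid}, and the observation that nonadjacency forces fixed-point-free action) matches the paper's setup, but the decisive ingredient is missing, and the replacement you propose would not close the argument. The paper's proof works in the contrapositive direction: it takes the subgroup ${}^3D_4(3)\le L\cong G/K$ (from Liebeck--Saxl--Seitz, Table~5.1) and invokes a guaranteed-fixed-point theorem of Zavarnitsine (Proposition~2 of \cite{zav1}): an element of order $73$ in ${}^3D_4(3)$ has a nonzero fixed vector on \emph{every} module over a field of characteristic $p\neq 3$. Via Lemma~\ref{semid} this creates the edge $\{73,p\}$ in $\Gamma(G)$, so any $p\in\pi(K)\setminus\{3\}$ must lie in the neighbourhood of $73$, which is $\{7\}$ for $\varepsilon=-$ and $\{13\}$ for $\varepsilon=+$ --- this is exactly where the two different conclusions of the lemma come from, an asymmetry your plan never isolates (indeed you say $13$ is ``kept'' in the $\varepsilon=-$ case, whereas it must be excluded there). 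Without some unisingularity-type statement of this kind, nonadjacency alone gives you nothing to contradict.

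Concretely, two steps of your plan fail. First, the claim that fixed-point-freeness of an element of order $37$ (resp.\ $757$) on the $p$-part of $K$ already ``forces $\pi(K)\subseteq\{3,5,7,13,19\}$'' is unsupported: a single cyclic group of prime order $r\neq p$ acts fixed-point-freely on a suitable $\mathbb{F}_p$-module of any sufficiently large dimension (just avoid trivial constituents), so no primes are eliminated at this stage; also $19,37,73$ (and $757$) are already excluded from $\pi(K)$ at the outset because they lie in $\Omega$, so arguing about $19\in\pi(K)$ is moot. Second, the ``module-theoretic bookkeeping'' you defer to cannot produce a contradiction as described: fixed-point-freeness of elements of orders $37$ and $73$ only forces the dimension $k$ of the Frattini quotient to be divisible by the orders of $p$ modulo $37$ and $73$, i.e.\ it yields a \emph{lower} bound on $k$, and lower bounds on the minimal faithful $\mathbb{F}_p$-representation of $E_6^{\pm}(3)$ point in the same direction --- two lower bounds do not conflict, since $K$ can be arbitrarily large. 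The quantity $5^k-1$ plays no role of the kind you suggest. What is actually needed is a statement valid for modules of \emph{all} dimensions (fixed points guaranteed for order-$73$ elements, as in \cite{zav1}, or a Frobenius-type argument with a noncyclic Sylow subgroup as the paper uses later in Lemma~\ref{l:K}), and your proposal neither cites nor substitutes such a result.
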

\begin{proof}
Take any prime $p\in\pi(K)$.
Since $K$ is nilpotent, we can assume that $K$ is a $p$-group.
Factoring $G$ by $\Phi(K)$, we arrive at a situation where $K$ is an elementary abelian $p$-group. According to~\cite[Table 5.1]{sylow5}, we see that ${}^3D_4(3)\le G/K$. Consider the action of ${}^3D_4(3)$ on $K$ defined by $\phi$ as in Lemma~\ref{semid}. Take an element $g\in{}^3D_4(3)$ of order $73$. If $p\neq3$, then $g$ fixes an element in $K$ by~\cite[Proposition 2]{zav1} and hence 73 and $p$ are adjacent in $\Gamma(G)$. Lemma~\ref{graph} implies that $p\in\{3,7\}$ if
$\varepsilon=-$ and $p\in\{3,13\}$ if
$\varepsilon=+$, as claimed.	
\end{proof}

\begin{lemma}\label{l:K}
$\pi(K)\subseteq \{3\}$.
\begin{proof}
Suppose that $7\in\pi(K)$ or $13\in\pi(K)$. Factoring $G$ by $\Phi(K)$, we arrive at a situation where $K$ is an elementary abelian group. By Lemma~\ref{almost}, we have $G/K\cong L$.

According to ~\cite[Table 5.1]{sylow5}, we see that $P\Omega^{+}_{8}(3)<L$.
Comparing orders of $L$ and $P\Omega^{+}_{8}(3)$, we infer that their Sylow 5-subgroups are isomorphic. Therefore, Sylow 5-subgroups of $L$ are non-cyclic \cite[Table 3]{tori}.
Consider a Sylow 5-subgroup $P$ of $L$. Denote by $\widetilde{P}$ the full preimage of $P$ in $G$. 
The conjugation action of 5-elements of $\widetilde{P}$ on $K$ is fixed-point free, 
so $\widetilde{P}$ is a Frobenious group. It follows from \cite[Chap.~10, Theorem~3.1~(iv)]{Gor} that $P$ is cyclic; a contradiction.
\end{proof}
\end{lemma}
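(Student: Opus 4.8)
The plan is the following. By Lemma~\ref{l:pi3-7} it suffices to rule out $7\in\pi(K)$ when $\varepsilon=-$ and $13\in\pi(K)$ when $\varepsilon=+$; write $p$ for this prime and argue by contradiction, assuming $p\in\pi(K)$. As a first step I would carry out the usual reduction to an elementary abelian situation: since $K$ is nilpotent, its Sylow $p$-subgroup $K_p$ is characteristic in $K$, hence normal in $G$, and $\Phi(K_p)$ is normal in $G$ as well; passing to $G/\Phi(K_p)$ and invoking Lemma~\ref{semid}, we may assume without altering $\Gamma(G)$ that $K$ is an elementary abelian $p$-group and that $G$ splits as $K\rtimes(G/K)$, while by Lemma~\ref{almost} we have $G/K\cong L$.

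The key structural ingredient is then that a Sylow $5$-subgroup of $L$ is non-cyclic. To obtain this I would use the subgroup $P\Omega_8^+(3)<L$ recorded in~\cite[Table~5.1]{sylow5}: comparing the exact power of $5$ dividing $|L|$ with that dividing $|P\Omega_8^+(3)|$ shows that these two groups have Sylow $5$-subgroups of equal order, so (as $P\Omega_8^+(3)\le L$) a Sylow $5$-subgroup of $P\Omega_8^+(3)$ is already a Sylow $5$-subgroup of $L$; and by the torus/Sylow data in~\cite[Table~3]{tori} a Sylow $5$-subgroup of $P\Omega_8^+(3)$ is non-cyclic.

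Finally, reading off the graphs in Lemma~\ref{graph} one sees that $5$ is nonadjacent to $p$ in $\Gamma(L)=\Gamma(G)$ (vertex $5$ is joined only to $2$ and $13$ when $\varepsilon=-$, and only to $2$ and $7$ when $\varepsilon=+$). Hence, taking a Sylow $5$-subgroup $P$ of $G/K\cong L$ and its full preimage $\widetilde P$ in $G$, Schur--Zassenhaus gives $\widetilde P=K\rtimes P_0$ with $P_0\cong P$, and every nontrivial element of $P_0$ acts fixed-point-freely on the $p$-group $K$ (a common fixed point would produce an element of order $5p$, i.e.\ an edge between $5$ and $p$). Thus $\widetilde P$ is a Frobenius group with complement $P_0$; since Frobenius complements have cyclic Sylow subgroups at every odd prime, $P_0$ and hence $P$ is cyclic, contradicting the previous paragraph. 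Therefore $p\notin\pi(K)$, and combined with Lemma~\ref{l:pi3-7} this yields $\pi(K)\subseteq\{3\}$.

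I expect the only non-routine point to be the non-cyclicity of the Sylow $5$-subgroup of $L$: it is the sole piece of detailed information about $L$ used here, and it forces one to exhibit a subgroup of $L$ carrying the full $5$-part of $|L|$ whose Sylow $5$-structure is explicitly known; the remainder is the standard nilpotent-radical/Frobenius-group reduction.
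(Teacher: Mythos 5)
Your proposal is correct and follows essentially the same route as the paper: reduce to an elementary abelian situation modulo $\Phi$, use $P\Omega_8^+(3)<L$ from \cite[Table 5.1]{sylow5} together with a comparison of $5$-parts and \cite[Table 3]{tori} to see that Sylow $5$-subgroups of $L$ are non-cyclic, and then derive a contradiction from the Frobenius structure of the preimage $\widetilde{P}$ of a Sylow $5$-subgroup (you merely make explicit the Schur--Zassenhaus complement and the cyclicity of odd-order Frobenius complements, which the paper leaves implicit).
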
	
\begin{lemma}\label{l:K=1}
$K=1$.
\end{lemma}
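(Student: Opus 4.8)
The plan is to show that the normal $3$-subgroup $K$, which by Lemma~\ref{l:K} is an elementary abelian $3$-group after factoring out $\Phi(K)$, must actually be trivial. The strategy mirrors the proof of Lemma~\ref{l:K}: find a subgroup of $G/K \cong L$ (from the list of subgroups in \cite[Table 5.1]{sylow5}) whose conjugation action on $K$ forces a Sylow subgroup of $L$ to be cyclic, contradicting the known structure of tori and Sylow subgroups of $E^\varepsilon_6(3)$. The key difference is that now $p = 3$, so the fixed-point-free arguments using elements of order coprime to $3$ (as in Lemma~\ref{l:pi3-7}) no longer directly apply; I need a different prime to play the role that $5$ played before.

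First I would reduce to the case where $K$ is elementary abelian by factoring out $\Phi(K)$ and invoking Lemma~\ref{semid} together with Lemma~\ref{almost}, so that $G \cong K \rtimes_\phi L$ with $K$ an $\mathbb{F}_3$-module for $L$. Then I would pick a prime $\ell \in \pi(L)$ that is nonadjacent to $3$ in $\Gamma(L)$: inspecting Lemma~\ref{graph}, for $\varepsilon = -$ one can take $\ell \in \{5, 41, 61\}$ (all nonadjacent to $3$), and for $\varepsilon = +$ one can take $\ell \in \{5, 41, 73, 757\}$. Take an element $g \in L$ of order $\ell$; since $\ell$ is nonadjacent to $3$, the action of $g$ on $K$ is fixed-point free, so $K \rtimes \langle g \rangle$ is a Frobenius group with kernel $K$, whence $\langle g \rangle$ — and more to the point, a full Sylow $\ell$-subgroup sitting appropriately — is forced to be cyclic. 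To turn this into a contradiction I would choose $\ell$ so that the Sylow $\ell$-subgroup of $L$ is noncyclic: as established in the proof of Lemma~\ref{l:K}, the Sylow $5$-subgroup of $E^\varepsilon_6(3)$ is noncyclic (using $P\Omega_8^+(3) \le L$ and \cite[Table 3]{tori}), so taking $\ell = 5$ works in both cases. The point is that the full preimage $\widetilde{P}$ in $G$ of a Sylow $5$-subgroup $P$ of $L$ has all its $5$-elements acting fixed-point-freely on $K$, hence $\widetilde{P}$ is Frobenius with kernel $K$, so $P \cong \widetilde{P}/K$ is cyclic — a contradiction. Therefore $K = 1$.

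The one subtlety I would be careful about is justifying that the $5$-elements of $\widetilde P$ act fixed-point-freely on $K$: this is exactly the statement that $5$ is nonadjacent to $3$ in $\Gamma(G) = \Gamma(L)$, which holds by Lemma~\ref{graph} in both cases, combined with the observation that a nontrivial fixed point of a $5$-element $x \in \widetilde P$ on $K$ would produce an element of order $15$ in $G$. Since $\Gamma(G)$ has no edge between $3$ and $5$, no such element exists, so the action is fixed-point free, and $\widetilde P$ is indeed Frobenius.

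The main obstacle — which is really quite mild here — is simply verifying that $5$ and $3$ are genuinely nonadjacent in both graphs (immediate from Lemma~\ref{graph}) and that the Sylow $5$-subgroup noncyclicity argument from Lemma~\ref{l:K} is available verbatim; no new input beyond the already-cited \cite[Table 5.1]{sylow5} and \cite[Table 3]{tori} is needed. Once $K = 1$, we have $G = G/K \cong L$ by Lemma~\ref{almost}, completing the proof of the theorem.
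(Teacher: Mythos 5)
Your argument hinges on the claim that $3$ and $5$ are nonadjacent in $\Gamma(L)$, and this is exactly where it breaks down: $L$ contains elements of order $15$, so $3$ and $5$ are adjacent in the actual prime graph of $E_6^{\varepsilon}(3)$. Indeed, $F_4(3)\le L$ for both $\varepsilon$ (a containment the paper itself uses), $F_4(3)$ contains $\mathrm{Spin}_9(3)$, and an orthogonal decomposition $9=4+5$ gives commuting subgroups $\Omega_4^-(3)\cong \mathrm{PSL}_2(9)$ and $\Omega_5(3)$, hence commuting elements of orders $5$ and $3$; equivalently, a long root element of order $3$ in $L$ centralizes a subsystem subgroup of type $A_5(3)$ (resp.\ ${}^2A_5(3)$ for $\varepsilon=-$) whose order is divisible by $5$. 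Consequently, in $G\cong K\rtimes_\phi L$ the $5$-elements need not act fixed-point-freely on the $3$-group $K$: a fixed point would only produce an element of order $15$, which is not forbidden. No Frobenius group arises and the contradiction evaporates. (The picture in Lemma~\ref{graph} omits the edge $\{3,5\}$ --- and $\{3,13\}$ for $\varepsilon=-$ --- which is presumably what misled you; note also that $61$ is adjacent to $3$ even in the printed figure, contrary to your list of admissible primes for $\varepsilon=-$.)

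The defect cannot be repaired by choosing a different prime, which is precisely why the paper abandons the scheme of Lemma~\ref{l:K} at this point. The primes genuinely nonadjacent to $3$ in $\Gamma(L)$ (namely $41,73,757$ for $\varepsilon=+$ and $19,37,41,73$ for $\varepsilon=-$) all divide $|L|$ to the first power, so their Sylow subgroups in $L$ are cyclic, and the conclusion ``a Frobenius complement of odd order is cyclic'' contradicts nothing. In the defining characteristic one needs input of the opposite kind: a guarantee of fixed points rather than their absence. This is what the paper does: after the same reduction to $K$ elementary abelian, it takes an element of order $73$ in $F_4(3)\le G/K$ and invokes the unisingularity of $F_4(3)$ \cite[Theorem~1.3]{unising} to conclude that this element fixes a non-identity vector of the $\mathbb{F}_3$-module $K$, producing an element of order $3\cdot 73$ and hence the edge $\{3,73\}$, which really is absent from $\Gamma(L)$ --- that is the contradiction. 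So your reduction steps are fine, but the core of your argument must be replaced by a fixed-point (unisingularity) argument of this type.
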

\begin{proof} 		
By Lemma~\ref{l:K}, $K$ is a 3-group. Assume that $K\neq1$. As above, we can assume that 
$K$ is elementary abelian. 
According to~\cite[Table 5.1]{sylow5}, we see that $F_4(3)\le G/K$. Consider the action of $F_4(3)$ on $K$ as in Lemma~\ref{semid}. Since $F_4(3)$ is unisingular~\cite[Theorem~1.3]{unising}, any element of order 73 in $F_4(3)$ fixes
some non-identity element in $K$. Therefore, primes 3 and 73 are adjacent in $\Gamma(G)$; a contradiction.
\end{proof}

Lemma~\ref{l:K=1} implies that $G\cong L$. This completes the proof of Theorem.

\noindent AUTHORS:

\medskip

\noindent Antonina~P.~Khramova (Sobolev Institute of Mathematics, Novosibirsk),  akhramova@math.nsc.ru

\medskip

\noindent Natalia~V.~Maslova (Krasovskii Institute of Mathematics and Mechanics UB RAS, Ural Federal University, and Ural Mathematical Center, Yekaterinburg),  butterson@mail.ru

\medskip

\noindent Viktor~V.~Panshin (Sobolev Institute of Mathematics and Novosibirsk State University, Novosibirsk), v.pansh1n@yandex.ru

\medskip

\noindent Alexey~M.~Staroletov (Sobolev Institute of Mathematics and Novosibirsk State University, Novosibirsk), staroletov@math.nsc.ru

\end{document}